%%%%%%%%%%%%%%%%%%%%%%%%%%%%%%%%%%%%%%%%%%%%%%%%%%%%%%%%%%%%%%%%%%%%%%%%%%%%%%
%%%%%%%%%%%%%%%%%%%%%%%%%%%%%%%%%%%%%%%%%%%%%%%%%%%%%%%%%%%%%%%%%%%%%%%%%%%%%%
%%                                                                          %%
%%                                         Wroc{\l}aw, February-March 2018  %%
%%                                                                          %%
%%                                                                          %%
%%                                                                          %%
%%     DIFFERENTIAL-RECURRENCE RELATIONS FOR DUAL BERNSTEIN POLYNOMIALS     %% 
%%                                                                          %%
%%                                                                          %%
%%                      Filip Chudy, Pawe{\l} Wo\'{z}ny                     %%
%%                                                                          %%
%%                                                                          %%
%%%%%%%%%%%%%%%%%%%%%%%%%%%%%%%%%%%%%%%%%%%%%%%%%%%%%%%%%%%%%%%%%%%%%%%%%%%%%%
%%%%%%%%%%%%%%%%%%%%%%%%%%%%%%%%%%%%%%%%%%%%%%%%%%%%%%%%%%%%%%%%%%%%%%%%%%%%%%

\documentclass[final,11pt]{elsarticle}

\usepackage{verbatim,a4wide}

\usepackage{amsmath}
\usepackage{amssymb}
\usepackage{amsthm}
\usepackage{bm}

%%%%%%%%%%%%%%%%%%

\usepackage[cp1250]{inputenc}
\usepackage[OT4]{fontenc}
\usepackage[english]{babel}

%%%%%%%%%%%%%%%%%%

\numberwithin{equation}{section}

%%%%%%%%%%%%%%%%%%

\newtheorem{theorem}{Theorem}[section]
\newtheorem{lemma}[theorem]{Lemma}
\newdefinition{remark}[theorem]{Remark}
\newdefinition{corollary}[theorem]{Corollary}
\newdefinition{definition}[theorem]{Definition}
\newdefinition{problem}[theorem]{Problem}
\newdefinition{example}[theorem]{Example}

\newcommand{\hyper}[5]{\,{}_{#1}F_{#2}\!\left(%
            \begin{array}{cc}{\displaystyle{#3}}\\[0.25ex]%
            {\displaystyle{#4}} \end{array}\bigg|\,{\displaystyle{#5}}%
            \right)}

%%%%%%%%%%%%%%%%%%

\journal{Applied Mathematics and Computation} 

%%%%%%%%%%%%%%%%%%

\begin{document}

%%%%%%%%%%%%%%%%%%%%%%%%%%%%%%%%%%%%%%%%%%%%%%%%%%%%%%%%%%%%%%%%%%%%%%%%%%%%%%
%%%%%%%%%%%%%%%%%%%%%%%%%%%%%%%%%%%%%%%%%%%%%%%%%%%%%%%%%%%%%%%%%%%%%%%%%%%%%%
%%%%%%%%%%%%%%%%%%%%%%%%%%%%%%%%%%%%%%%%%%%%%%%%%%%%%%%%%%%%%%%%%%%%%%%%%%%%%%
%%%%%%%%%%%%%%%%%%%%%%%%%%%%%%%%%%%%%%%%%%%%%%%%%%%%%%%%%%%%%%%%%%%%%%%%%%%%%%
\begin{frontmatter}

%%%%%%%%%%
\title{Differential-recurrence properties of dual Bernstein polynomials}
%%%%%%%%%%

\author{Filip Chudy}
\ead{Filip.Chudy@cs.uni.wroc.pl}

\author{Pawe{\l} Wo\'{z}ny\corref{cor}}
\ead{Pawel.Wozny@cs.uni.wroc.pl}
\cortext[cor]{Corresponding author. Fax {+}48 71 3757801}
\address{Institute of Computer Science, University of Wroc{\l}aw,
         ul.~Joliot-Curie 15, 50-383 Wroc{\l}aw, Poland}

\begin{abstract}
New differential-recurrence properties of dual Bernstein polynomials are given
which follow from relations between dual Bernstein and orthogonal Hahn and 
Jacobi polynomials. Using these results, a fourth-order differential equation
satisfied by dual Bernstein polynomials has been constructed. Also, 
a fourth-order recurrence relation for these polynomials has been obtained; 
this result may be useful in the efficient solution of some computational
problems.
\end{abstract}

\begin{keyword}
Differential equations; Recurrence relations; Bernstein basis polynomials; Dual
Bernstein polynomials; Jacobi polynomials; Hahn polynomials; Generalized
hypergeometric functions.   
\end{keyword}

\end{frontmatter}
%%%%%%%%%%%%%%%%%%%%%%%%%%%%%%%%%%%%%%%%%%%%%%%%%%%%%%%%%%%%%%%%%%%%%%%%%%%%%%
%%%%%%%%%%%%%%%%%%%%%%%%%%%%%%%%%%%%%%%%%%%%%%%%%%%%%%%%%%%%%%%%%%%%%%%%%%%%%%
%%%%%%%%%%%%%%%%%%%%%%%%%%%%%%%%%%%%%%%%%%%%%%%%%%%%%%%%%%%%%%%%%%%%%%%%%%%%%%
%%%%%%%%%%%%%%%%%%%%%%%%%%%%%%%%%%%%%%%%%%%%%%%%%%%%%%%%%%%%%%%%%%%%%%%%%%%%%%

%%%%%%%%%%%%%%%%%%%%%%%%%%%%%%%%%%%%%%%%%%%%%%%%%%%%%%%%%%%%%%%%%%%%%%%%%%%%%%
%%%%%%%%%%%%%%%%%%%%%%%%%%%%%%%%%%%%%%%%%%%%%%%%%%%%%%%%%%%%%%%%%%%%%%%%%%%%%%
\section{Introduction}                                  \label{S:Introduction}
%%%%%%%%%%%%%%%%%%%%%%%%%%%%%%%%%%%%%%%%%%%%%%%%%%%%%%%%%%%%%%%%%%%%%%%%%%%%%%
%%%%%%%%%%%%%%%%%%%%%%%%%%%%%%%%%%%%%%%%%%%%%%%%%%%%%%%%%%%%%%%%%%%%%%%%%%%%%%

Dual Bernstein polynomials associated with the Legendre inner product were
introduced by Ciesielski in 1987 \cite{ZC1987}. Their properties and 
generalizations were studied, e.g., by J\"{u}ttler \cite{BJ1998}, Rababah and
Al-Natour \cite{RN2007,RN2008}, as well as by Lewanowicz and Woźny
\cite{LW2006,LW2011,WL2009}. It is worth noticing that dual Bernstein
polynomials introduced in~\cite{LW2006}, which are associated with the shifted
Jacobi inner product, have recently found many applications in numerical
analysis and computer graphics (curve intersection using B\'{e}zier clipping,
degree reduction and merging of B\'{e}zier curves, polynomial approximation of
rational B\'{e}zier curves, etc.). Note that skillful use of these polynomials
often results in less costly algorithms which solve some computational problems
(see
\cite{BJ2007,GLW2016,GLW2017,LWK2012,LZLW2009,SN1990,WGL2015,WL2009}).

The main purpose of this article is to give new properties of dual Bernstein
polynomials considered in~\cite{LW2006}. Namely, we derive some
differential-recurrence relations which allow us to construct a differential 
equation and a recurrence relation for these polynomials.   

The paper is organized as follows. Section~\ref{S:DualBernsteinPoly} contains
definitions, notation and important properties of dual Bernstein polynomials
obtained in~\cite{LW2006}. Next, in Section~\ref{S:DiffRecRel}, we present
new results which imply: i) the fourth-order differential equation with 
polynomial coefficients (see~\S\ref{S:DiffEqDualBer}); ii) the recurrence
relation of order four (see~\S\ref{S:RecRelDualBer}), both of which are
satisfied by dual Bernstein polynomials. The latter result may be useful in 
finding the efficient solution of some computational tasks, e.g., fast
evaluation of dual Bernstein polynomials and their linear combinations or
integrals involving these dual polynomials (see~\S\ref{S:Applications}).

%%%%%%%%%%%%%%%%%%%%%%%%%%%%%%%%%%%%%%%%%%%%%%%%%%%%%%%%%%%%%%%%%%%%%%%%%%%%%%
%%%%%%%%%%%%%%%%%%%%%%%%%%%%%%%%%%%%%%%%%%%%%%%%%%%%%%%%%%%%%%%%%%%%%%%%%%%%%%
\section{Dual Bernstein polynomials}               \label{S:DualBernsteinPoly}
%%%%%%%%%%%%%%%%%%%%%%%%%%%%%%%%%%%%%%%%%%%%%%%%%%%%%%%%%%%%%%%%%%%%%%%%%%%%%%
%%%%%%%%%%%%%%%%%%%%%%%%%%%%%%%%%%%%%%%%%%%%%%%%%%%%%%%%%%%%%%%%%%%%%%%%%%%%%%

The \textit{generalized hypergeometric function} (see, e.g., 
\cite[\S2.1]{AAR1999}) is defined by
$$
\hyper{p}{q}{a_1,\ldots, a_p}{b_1,\ldots,b_q}{x}
          :=\sum_{l=0}^{\infty}\frac{(a_1)_l\ldots(a_p)_l}
                                    {(b_1)_l\ldots(b_q)_l}\cdot\frac{x^l}{l!},
$$
where $p,q\in\mathbb N$, $a_i\in\mathbb C$ $(i=1,2,\ldots,p)$,
$b_j\in\mathbb C$ $(j=1,2,\ldots,q)$, $x\in\mathbb C$, and $(c)_l$ 
$(c\in\mathbb C;\ l\in\mathbb N)$ denotes the \textit{Pochhammer symbol},
$$
(c)_0:=1,\qquad (c)_l:=c(c+1)\ldots(c+l-1)\quad (l\geq 1).
$$
Notice that if one of the parameters $a_i$ is equal to $-k$ $(k\in\mathbb N)$ 
then the generalized hypergeometric function is a polynomial in $x$ of degree 
at most $k$.

For $\alpha,\beta>-1$, let us introduce the inner product
$\left<\cdot,\cdot\right>_{\alpha,\beta}$ by
\begin{equation}\label{E:InnerProd}
\left<f,g\right>_{\alpha,\beta}:=
             \int_{0}^{1}(1-x)^\alpha x^\beta f(x)g(x)\,\mbox{d}x.
\end{equation}
             
Recall that \textit{shifted Jacobi polynomials} 
$R_k^{(\alpha,\beta)}$ (cf., e.g., \cite[\S1.8]{KS1998}),
\begin{equation}\label{E:JacobiP}
R_k^{(\alpha,\beta)}(x):=\frac{(\alpha+1)_k}{k!}
\hyper{2}{1}{-k,\, k+\alpha+\beta+1}{\alpha+1}{1-x}\qquad (k=0,1,\ldots),
\end{equation}
are orthogonal with respect to the inner product~\eqref{E:InnerProd}, i.e.,
$$
\left<R^{(\alpha,\beta)}_k,R^{(\alpha,\beta)}_l\right>_{\alpha,\beta}=
\delta_{kl}h_k\qquad (k,l\in\mathbb N),
$$
where $\delta_{kl}$ is the \textit{Kronecker delta} ($\delta_{kl}=0$ for 
$k\neq l$ and $\delta_{kk}=1$) and
$$
h_k:=K\,\frac{(\alpha+1)_k(\beta+1)_k}
             {k!(2k/\sigma+1)(\sigma)_{k}}\qquad (k=0,1,\ldots)
$$
with $\sigma:=\alpha+\beta+1$, 
                    $K:={\Gamma(\alpha+1)\Gamma(\beta+1)}/{\Gamma(\sigma+1)}$.

Shifted Jacobi polynomials satisfy the second-order differential equation
with polynomial coefficients of the form (cf.~\cite[Eq.~(1.8.5)]{KS1998})
\begin{equation}\label{E:DiffEqJacobi}
\bm{L}^{(\alpha,\beta)}R^{(\alpha,\beta)}_k(x)=
        \lambda_k^{(\alpha,\beta)}R^{(\alpha,\beta)}_k(x)\qquad (k=0,1,\ldots),
\end{equation}
where
$$
\bm{L}^{(\alpha,\beta)}:=x(x-1)\bm{D}^2+
  \tfrac{1}{2}\left(\alpha-\beta+(\sigma+1)(2x-1)\right)\bm{D},\qquad
\lambda_k^{(\alpha,\beta)}:=k(k+\sigma),  
$$
and\label{P:DiffOper}
$\displaystyle \bm{D}:=\frac{\mbox{d}}{\mbox{d}x}$ is a differentiation 
operator with respect to the variable $x$. 

It is well known that (cf.~\cite[p.~117]{AAR1999}) 
\begin{equation}\label{E:JacobiSym}
R^{(\alpha,\beta)}_k(x)=(-1)^kR^{(\beta,\alpha)}_k(1-x).
\end{equation}

Moreover, we also use the second family of orthogonal polynomials,
namely \textit{Hahn polynomials},
\begin{equation}\label{E:HahnP}
Q_k(x;\alpha,\beta;N):=\hyper{3}{2}{-k,\,k+\alpha+\beta+1,\, -x}
                                   {\alpha+1,\, -N}{1}
\qquad (k=0,1,\ldots,N;\ N\in\mathbb N)                                   
\end{equation}
(see, e.g., \cite[\S1.5]{KS1998}). 

Hahn polynomials satisfy the second-order difference equation with polynomial
coefficients of the form
\begin{equation}\label{E:DiffEqHahn}
{\cal L}^{(\alpha,\beta,N)}_x Q_k(x;\alpha,\beta;N)=\lambda_k^{(\alpha,\beta)}
Q_k(x;\alpha,\beta;N)\qquad (k=0,1,\ldots),
\end{equation}
where
\begin{equation}\label{E:OperL}
{\cal L}^{(\alpha,\beta,N)}_x f(x):=a(x)f(x+1)-c(x)f(x)+b(x)f(x-1),
\end{equation}
and
$$
a(x):=(x-N)(x+\alpha+1),\qquad b(x):=x(x-\beta-N-1),\qquad c(x):=a(x)+b(x).
$$
See, e.g., \cite[Eq.~(1.5.5)]{KS1998}.

Let $\Pi_n$ $(n\in\mathbb N)$ denote the set of polynomials of degree at most
$n$. \textit{Bernstein basis polynomials} $B^n_i$ are given by
\begin{equation}\label{E:BernPoly}
B^n_i(x):=\binom{n}{i}x^i(1-x)^{n-i}\qquad (i=0,1,\ldots,n;\ n\in\mathbb N).
\end{equation}
One can easily check that polynomials $B^n_0, B^n_1, \ldots, B^n_n$ form 
a basis of the space $\Pi_n$.

Bernstein basis polynomials~\eqref{E:BernPoly} have many applications in
approximation theory, numerical analysis, as well as in computer aided
geometric design (see, e.g., books \cite{Bustamante2017}, \cite{Farin2002} 
and papers cited therein). In view of their applications in computer graphics
and numerical analysis, the so-called \textit{dual Bernstein polynomials}
have become quite popular.

\begin{definition}[{\cite[\S5]{LW2006}}]
\textit{Dual Bernstein polynomials of degree $n$},
$$
D^n_0(x;\alpha,\beta),\, D^n_1(x;\alpha,\beta),\,\ldots,\,
D^n_n(x;\alpha,\beta)\in\Pi_n,
$$
are defined so that the following conditions hold:
$$
\left<B^n_i,D^n_j(\cdot;\alpha,\beta)\right>_{\alpha,\beta}=\delta_{ij}
\qquad (i,j=0,1,\ldots,n)
$$
(cf.~\eqref{E:InnerProd}).
\end{definition}

For the properties and applications of dual Bernstein polynomials
$D^n_i(x;\alpha,\beta)$, see
\cite{BJ2007,GLW2016,GLW2017,LW2006,LWK2012,LZLW2009,SN1990,WGL2015,WL2009}. 
Note that in the case $\alpha=\beta=0$ these polynomials were defined earlier 
by Ciesielski in~\cite{ZC1987}.

\begin{remark}
We adopt the convention that  $D^n_i(x;\alpha,\beta):=0$ for $i<0$ or $i>n$.
\end{remark}

% Dual Bernstein polynomials are closely related to shifted Jacobi polynomials
% and Hahn polynomials. In the construction of the differential equation and
% recurrence relation for $D^n_i(x;\alpha,\beta)$, some properties of these
% polynomials given in~\cite[\S5]{LW2006} are crucial.

Dual Bernstein polynomials, Hahn polynomials and shifted Jacobi polynomials
are related in the following way \cite[Theorem 5.2)]{LW2006}:
\begin{equation}\label{E:DualB-Jacobi}
D^n_i(x;\alpha,\beta)=
      K^{-1}\sum_{k=0}^{n}
         (-1)^k\frac{(2k/\sigma+1)(\sigma)_k}
                    {(\alpha+1)_k}Q_k(i;\beta,\alpha;n)R^{(\alpha,\beta)}_k(x)
\qquad (0\leq i\leq n).                    
\end{equation}
  
%In particular, from Eq.~\eqref{E:DualB-Jacobi}, it follows that
Note that
\begin{equation}\label{E:DualBerSym}
D^n_i(x;\alpha,\beta)=D^n_{n-i}(1-x;\beta,\alpha)\qquad (i=0,1,\ldots,n)
\end{equation}
(see \cite[Corollary 5.3]{LW2006}).

The polynomial $D^n_i(x;\alpha,\beta)$ can be expressed as \textit{a short}
linear combination of $\min(i,n-i)+1$ shifted Jacobi polynomials with shifted
parameters: 
\begin{eqnarray}
\label{E:DualBerShort}
D^n_i(x;\alpha,\beta)&=&\frac{(-1)^{n-i}(\sigma+1)_{n}}
                             {K\,(\alpha+1)_{n-i}(\beta+1)_i}
                                  \sum_{k=0}^{i}\frac{(-i)_k}{(-n)_k}\,
				         R^{(\alpha,\beta+k+1)}_{n-k}(x),\\
\nonumber				         
D^n_{n-i}(x;\alpha,\beta)&=&\frac{(-1)^{i}(\sigma+1)_{n}}
                                 {K\,(\alpha+1)_{i}(\beta+1)_{n-i}}
                                    \sum_{k=0}^{i}(-1)^k\frac{(-i)_k}{(-n)_k}
                                             \,R^{(\alpha+k+1,\beta)}_{n-k}(x),
\end{eqnarray}
where $i=0,1,\ldots,n$. See~\cite[Corollary 5.4]{LW2006}. 
% Notice that coefficients of the given expansions do not depend on parameters
% $\alpha$ and $\beta$.

%%%%%%%%%%%%%%%%%%%%%%%%%%%%%%%%%%%%%%%%%%%%%%%%%%%%%%%%%%%%%%%%%%%%%%%%%%%%%%
%%%%%%%%%%%%%%%%%%%%%%%%%%%%%%%%%%%%%%%%%%%%%%%%%%%%%%%%%%%%%%%%%%%%%%%%%%%%%%
\section{Differential-recurrence relations}               \label{S:DiffRecRel}
%%%%%%%%%%%%%%%%%%%%%%%%%%%%%%%%%%%%%%%%%%%%%%%%%%%%%%%%%%%%%%%%%%%%%%%%%%%%%%
%%%%%%%%%%%%%%%%%%%%%%%%%%%%%%%%%%%%%%%%%%%%%%%%%%%%%%%%%%%%%%%%%%%%%%%%%%%%%%

Let us first find the representation of the polynomial $D^n_i(x;\alpha,\beta)$ 
in the basis $(1-x)^j$ $(j=0,1,\ldots,n)$. By using~\eqref{E:JacobiP} 
in~\eqref{E:DualBerShort} and doing some algebra, we obtain 
\begin{eqnarray}\nonumber
D^n_i(x;\alpha,\beta)&=&\frac{(-1)^{n-i}(\sigma+1)_{n}}
                             {K\,(\alpha+1)_{n-i}(\beta+1)_i}
                                  \sum_{k=0}^{i}\frac{(-i)_k}{(-n)_k}
                                    \frac{(\alpha+1)_{n-k}}{(n-k)!}
                           \hyper{2}{1}{k-n,\, n+\sigma+1}{\alpha+1}{1-x}\\
&=&\label{E:DualBer_Power}
A^{(\alpha,\beta)}_{ni}\frac{(\alpha+1)_n}{(n+1)!}
                \sum_{j=0}^{n}B^{(\alpha,\beta)}_{nj}
                  \hyper{3}{2}{j-n,\,-i,\,1}{-n,\,-n-\alpha}{1}\cdot (1-x)^j,
\end{eqnarray}
where  
\begin{equation}\label{E:Def-A-B}
A^{(\alpha,\beta)}_{ni}:=\frac{(-1)^{n-i}(n+1)(\sigma+1)_{n}}
                              {K\,(\alpha+1)_{n-i}(\beta+1)_i},\qquad
B^{(\alpha,\beta)}_{nj}:=\frac{(-n)_j(n+\sigma+1)_j}{j!(\alpha+1)_j}.                              
\end{equation}

Let us define
$$
F(i,j):=\hyper{3}{2}{j-n,\,-i,\,1}{-n,\,-n-\alpha}{1}
\qquad (i,j=0,1,\ldots,n).
$$
Using the Zeilberger algorithm \cite[\S6]{PWZ}, one can prove the following
lemma.

\begin{lemma}\label{L:RecRelF_ij}
Quantities $F(i,j)$ satisfy the first-order non-homogeneous recurrence relation
of the form
\begin{equation}\label{E:RecRelF_ij}
(i-n)(n-i+\alpha)F(i+1,j)-(i+1)(n+j-i+\alpha+1)F(i,j)=-(n+1)(n+\alpha+1),
\end{equation}
where $0\leq i,j\leq n$ and we adopt the convention that $F(n+1,j):=0$. 
\end{lemma}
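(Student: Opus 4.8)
The plan is to verify the stated recurrence directly by treating $F(i,j)$ as a terminating hypergeometric sum and finding the coefficients of a first-order contiguous relation in the parameter $i$. Writing out
$$
F(i,j)=\sum_{l=0}^{n}\frac{(j-n)_l(-i)_l(1)_l}{(-n)_l(-n-\alpha)_l\, l!},
$$
I would define the summand $t(i,l)$ and look for rational functions $P(i)$, $Q(i)$ (independent of $l$) and a certificate $R(i,l)$ so that
$$
P(i)\,t(i+1,l)+Q(i)\,t(i,l)=G(i,l+1)-G(i,l),\qquad G(i,l):=R(i,l)\,t(i,l).
$$
This is precisely the output of Zeilberger's creative-telescoping algorithm, which the authors invoke. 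Summing the telescoping right-hand side over $l=0,\dots,n$ collapses to boundary terms $G(i,n+1)-G(i,0)$, and the left-hand side becomes $P(i)F(i+1,j)+Q(i)F(i,j)$. Matching this against \eqref{E:RecRelF_ij} means I expect $P(i)=(i-n)(n-i+\alpha)$ and $Q(i)=-(i+1)(n+j-i+\alpha+1)$, with the boundary contribution producing the constant right-hand side $-(n+1)(n+\alpha+1)$.

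Concretely, I would carry out the following steps. First, compute the term ratio $t(i,l+1)/t(i,l)$, which for this ${}_3F_2$ is the rational function $(j-n+l)(-i+l)(1+l)\big/\big((-n+l)(-n-\alpha+l)(1+l)\big)$; the factor $(1+l)$ cancels, reflecting that the upper parameter $1$ just truncates the series mildly. Second, feed the resulting proper-hypergeometric term into Gosper/Zeilberger to produce the pair $(P,Q)$ together with the explicit rational certificate $R(i,l)$. Third, once the certificate is in hand, the verification no longer requires the algorithm at all: I would simply confirm the identity $P(i)t(i+1,l)+Q(i)t(i,l)=G(i,l+1)-G(i,l)$ as an elementary algebraic identity in $l$ (and in $i$, $j$, $\alpha$), after which telescoping gives the result rigorously. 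Fourth, I would evaluate the boundary terms, using $t(i,0)=1$ and the vanishing of $t$ beyond the truncation point, to pin down the constant $-(n+1)(n+\alpha+1)$ and to check that the endpoint convention $F(n+1,j):=0$ is consistent with setting $i=n$.

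The main obstacle is pinning down the certificate $R(i,l)$ and confirming that telescoping yields exactly the constant right-hand side rather than a $j$-dependent boundary contribution. Because the recurrence is non-homogeneous, the right-hand side $-(n+1)(n+\alpha+1)$ must emerge cleanly from $G(i,n+1)-G(i,0)$, so I would pay particular attention to how the upper parameter $1$ and the negative-integer lower parameters $-n$, $-n-\alpha$ interact with the summation bounds; a miscounted boundary term is the likeliest source of error. A useful sanity check is to test the relation at small values, say $n=1,2$ and $i=0$, where both $F(i+1,j)$ and $F(i,j)$ reduce to short explicit sums, confirming both the coefficient polynomials and the constant before trusting the general certificate. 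Once the algebraic identity for the certificate is verified symbolically, the proof is complete and self-contained, independent of any black-box invocation of the algorithm.
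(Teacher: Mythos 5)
Your proposal takes essentially the same route as the paper, which offers no proof beyond the one-line remark that the identity follows from Zeilberger's algorithm; your plan simply spells out that creative-telescoping argument in full (certificate, telescoping sum, boundary terms), and it is correct — the non-homogeneous right-hand side does arise from the boundary term $-G(i,0)=-R(i,0)$, and spot checks (e.g.\ $n=1$) confirm the stated coefficients and the constant $-(n+1)(n+\alpha+1)$.
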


Lemma~\ref{L:RecRelF_ij} allows us to give the first of the mentioned 
differential-recurrence relations for dual Bernstein polynomials.

\begin{theorem}\label{T:DiffRec-I}
For $i=0,1,\ldots,n$, the following formula holds:
\begin{eqnarray}\nonumber
\lefteqn{\Big((1-x)\bm{D}-(n-i+\alpha+1)\bm{I}\Big)D^n_i(x;\alpha,\beta)}\\
&&\label{E:DiffRec-I}
\hspace{1.5cm}
=\frac{(i-n)(i+\beta+1)}{i+1}D^n_{i+1}(x;\alpha,\beta)
%\frac{(-1)^{n-i+1}(n+1)(n+\alpha+1)(\sigma+1)_n}
%     {K(i+1)(\alpha+1)_{n-i}(\beta+1)_i}
-A^{(\alpha,\beta)}_{ni}\frac{n+\alpha+1}{i+1}R^{(\alpha,\beta+1)}_n(x),
\end{eqnarray}
where $\displaystyle \bm{D}:=\frac{\mbox{d}}{\mbox{d}x}$
(cf.~p.~\ref{P:DiffOper}), and $\bm{I}$ is the identity operator.
\end{theorem}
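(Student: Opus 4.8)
The plan is to start from the power-basis representation~\eqref{E:DualBer_Power} and apply the operator $(1-x)\bm{D}-(n-i+\alpha+1)\bm{I}$ term by term. Since $\bm{D}(1-x)^j=-j(1-x)^{j-1}$, we have $(1-x)\bm{D}(1-x)^j=-j(1-x)^j$, so the operator acts diagonally on the basis $(1-x)^j$, sending $(1-x)^j\mapsto -(j+n-i+\alpha+1)(1-x)^j$. Thus
\begin{equation*}
\Big((1-x)\bm{D}-(n-i+\alpha+1)\bm{I}\Big)D^n_i(x;\alpha,\beta)
=-A^{(\alpha,\beta)}_{ni}\frac{(\alpha+1)_n}{(n+1)!}
\sum_{j=0}^{n}B^{(\alpha,\beta)}_{nj}\,(j+n-i+\alpha+1)F(i,j)\,(1-x)^j.
\end{equation*}
The right-hand side of~\eqref{E:DiffRec-I}, meanwhile, is a combination of $D^n_{i+1}$ and a single shifted Jacobi polynomial $R^{(\alpha,\beta+1)}_n$. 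My plan is to expand \emph{both} sides in the basis $(1-x)^j$ and match coefficients.

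First I would rewrite the recurrence from Lemma~\ref{L:RecRelF_ij}. Solving~\eqref{E:RecRelF_ij} for the combination $(n+j-i+\alpha+1)F(i,j)$ gives
\begin{equation*}
(i+1)(n+j-i+\alpha+1)F(i,j)=(i-n)(n-i+\alpha)F(i+1,j)+(n+1)(n+\alpha+1),
\end{equation*}
which is precisely the coefficient of $(1-x)^j$ appearing (up to the prefactor) on the left-hand side. Substituting this identity, the left-hand side splits into two sums: one proportional to $\sum_j B^{(\alpha,\beta)}_{nj}F(i+1,j)(1-x)^j$ and one proportional to $\sum_j B^{(\alpha,\beta)}_{nj}(1-x)^j$. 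The first sum reconstructs $D^n_{i+1}(x;\alpha,\beta)$ via~\eqref{E:DualBer_Power} (with index $i+1$ in place of $i$), and the second sum, by comparing with~\eqref{E:DualBer_Power} in the degenerate case where the $\,{}_3F_2$ factor is absent, reconstructs the single Jacobi polynomial $R^{(\alpha,\beta+1)}_n(x)$. The main work is bookkeeping the ratios of the constants $A^{(\alpha,\beta)}_{ni}$, $A^{(\alpha,\beta)}_{n,i+1}$ and $B^{(\alpha,\beta)}_{nj}$ so that the resulting prefactors collapse to $(i-n)(i+\beta+1)/(i+1)$ and $-A^{(\alpha,\beta)}_{ni}(n+\alpha+1)/(i+1)$ respectively.

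The identification of the constant-type sum $\sum_j B^{(\alpha,\beta)}_{nj}(1-x)^j$ with $R^{(\alpha,\beta+1)}_n(x)$ deserves care: from the definition~\eqref{E:Def-A-B} of $B^{(\alpha,\beta)}_{nj}$ and the series~\eqref{E:JacobiP}, this sum is $\,{}_2F_1(-n,n+\sigma+1;\alpha+1;1-x)$, which is $n!/(\alpha+1)_n$ times $R^{(\alpha,\beta)}_n$; the shift of the second parameter from $\beta$ to $\beta+1$ amounts to recognizing that $n+\sigma+1=n+(\alpha+\beta+1)+1$ is exactly $n+\alpha+(\beta+1)+1$, i.e.\ the top parameter of $R^{(\alpha,\beta+1)}_n$. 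The hardest part is therefore not any deep identity but the verification that, after substituting the Zeilberger recurrence, the two families of constants align \emph{exactly} with the claimed coefficients; I expect this to reduce to elementary Pochhammer manipulations such as $(\beta+1)_i=(\beta+1)_{i+1}/(\beta+i+1)$ and $(\alpha+1)_{n-i}=(n-i+\alpha)(\alpha+1)_{n-i-1}$, together with the relation $\lambda_n^{(\alpha,\beta)}$-type simplifications, and I would carry out this coefficient-matching as the concluding computation.
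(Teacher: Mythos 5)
Your proposal is correct and follows essentially the same route as the paper: the paper's proof multiplies the recurrence~\eqref{E:RecRelF_ij} by $B^{(\alpha,\beta)}_{nj}(1-x)^j$, sums over $j$, and identifies the resulting sums with $(1-x)\bm{D}D^n_i$, $D^n_{i+1}$ and $R^{(\alpha,\beta+1)}_n=\frac{(\alpha+1)_n}{n!}\sum_j B^{(\alpha,\beta)}_{nj}(1-x)^j$ --- exactly the computation you describe, merely read in the opposite direction. The coefficient bookkeeping you defer does close up (e.g.\ $A^{(\alpha,\beta)}_{ni}/A^{(\alpha,\beta)}_{n,i+1}=-(i+\beta+1)/(n-i+\alpha)$ turns $(i-n)(n-i+\alpha)/(i+1)$ into the stated $(i-n)(i+\beta+1)/(i+1)$), so the argument is complete.
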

\begin{proof}
% To obtain the relation~\eqref{E:DiffRec-I}, 
We add up the recurrence relation~\eqref{E:RecRelF_ij}, multiplied by
$B^{(\alpha,\beta)}_{nj}(1-x)^j$, over all $0\leq j\leq n$ and take into 
account that
\begin{eqnarray*}
&&
R^{(\alpha,\beta+1)}_n(x)=
       \frac{(\alpha+1)_n}{n!}\sum_{j=0}^{n}B^{(\alpha,\beta)}_{nj}(1-x)^j,\\
&&
\bm{D}D^n_i(x;\alpha,\beta)=-A^{(\alpha,\beta)}_{ni}\frac{(\alpha+1)_n}{(n+1)!}
                  \sum_{j=1}^{n}B^{(\alpha,\beta)}_{nj}F(i,j)\cdot j(1-x)^{j-1}
\end{eqnarray*}
(cf.~\eqref{E:Def-A-B}).
\end{proof}

% A relation for $D^n_i(x;\alpha,\beta)$ which is symmetrical
% to~\eqref{E:DiffRec-I} can be found by applying~\eqref{E:JacobiSym}
% and~\eqref{E:DualBerSym} in~\eqref{E:DiffRec-I}.

Another relation for $D^n_i(x;\alpha,\beta)$ can be found by applying symmetry
relations~\eqref{E:JacobiSym} and~\eqref{E:DualBerSym} in~\eqref{E:DiffRec-I}.

\begin{theorem}\label{T:DiffRec-II}
For $i=0,1,\ldots,n$, we have
\begin{eqnarray}\nonumber
\lefteqn{\Big(x\bm{D}+(i+\beta+1)\bm{I}\Big)D^n_i(x;\alpha,\beta)}\\
&&\label{E:DiffRec-II}
\hspace{1.5cm}
=\frac{i(n-i+\alpha+1)}{n-i+1}D^n_{i-1}(x;\alpha,\beta)+
%\frac{(-1)^{n-i}(n+1)(n+\beta+1)(\sigma+1)_n}
%     {K(n-i+1)(\alpha+1)_{n-i}(\beta+1)_i}
A^{(\alpha,\beta)}_{ni}\frac{n+\beta+1}{n-i+1}R^{(\alpha+1,\beta)}_n(x).
\end{eqnarray}
\end{theorem}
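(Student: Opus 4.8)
The plan is to derive Theorem~\ref{T:DiffRec-II} directly from the already-established Theorem~\ref{T:DiffRec-I} by exploiting the two symmetry relations~\eqref{E:JacobiSym} and~\eqref{E:DualBerSym}, rather than repeating the Zeilberger-based computation. The key observation is that the left-hand operator $x\bm{D}+(i+\beta+1)\bm{I}$ in~\eqref{E:DiffRec-II} should be the image of the operator $(1-x)\bm{D}-(n-i+\alpha+1)\bm{I}$ from~\eqref{E:DiffRec-I} under the change of variable $x\mapsto 1-x$ together with the swap $\alpha\leftrightarrow\beta$ and $i\mapsto n-i$. I would make this precise as follows.

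First I would start from~\eqref{E:DiffRec-I}, but written with the parameters $\alpha$ and $\beta$ interchanged and with the index $i$ replaced by $n-i$; that is, I would apply Theorem~\ref{T:DiffRec-I} to $D^n_{n-i}(x;\beta,\alpha)$. This gives
\begin{equation*}
\Big((1-x)\bm{D}-(i+\beta+1)\bm{I}\Big)D^n_{n-i}(x;\beta,\alpha)
=\frac{-i(i-\alpha+\ldots)}{\ldots}\,D^n_{n-i+1}(x;\beta,\alpha)-A^{(\beta,\alpha)}_{n,n-i}\frac{n+\beta+1}{n-i+1}R^{(\beta,\alpha+1)}_n(x),
\end{equation*}
where the exact constants follow by substituting $i\mapsto n-i$, $\alpha\leftrightarrow\beta$ into the coefficients $(i-n)(i+\beta+1)/(i+1)$ and $A^{(\alpha,\beta)}_{ni}(n+\alpha+1)/(i+1)$. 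The next step is to perform the substitution $x\mapsto 1-x$ throughout this identity. Under this substitution the differentiation operator transforms as $\bm{D}\mapsto-\bm{D}$, so that $(1-x)\bm{D}$ evaluated at $1-x$ becomes $-x\bm{D}$, and thus the left-hand operator becomes (up to an overall sign) exactly $x\bm{D}+(i+\beta+1)\bm{I}$ acting on the argument-reflected polynomials.

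I would then rewrite every reflected object in its unreflected form using the two symmetries. By~\eqref{E:DualBerSym}, $D^n_{n-i}(1-x;\beta,\alpha)=D^n_i(x;\alpha,\beta)$ and $D^n_{n-i+1}(1-x;\beta,\alpha)=D^n_{i-1}(x;\alpha,\beta)$, which convert the dual-Bernstein terms into the ones appearing in~\eqref{E:DiffRec-II}. By~\eqref{E:JacobiSym}, $R^{(\beta,\alpha+1)}_n(1-x)=(-1)^nR^{(\alpha+1,\beta)}_n(x)$, which converts the Jacobi term. After these replacements the identity has precisely the structure of~\eqref{E:DiffRec-II}, and the remaining task is purely arithmetic: to verify that the transformed constants match, in particular that the stray sign factors $(-1)^n$ coming from~\eqref{E:JacobiSym} and from the definition of $A^{(\beta,\alpha)}_{n,n-i}$ in~\eqref{E:Def-A-B} cancel correctly, and that $A^{(\beta,\alpha)}_{n,n-i}$ simplifies to $A^{(\alpha,\beta)}_{ni}$.

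The main obstacle, such as it is, will be this bookkeeping of signs and Pochhammer factors in the coefficients. I expect that checking $A^{(\beta,\alpha)}_{n,n-i}=A^{(\alpha,\beta)}_{ni}$ from the definition~\eqref{E:Def-A-B} is immediate, since interchanging $\alpha\leftrightarrow\beta$ and $i\mapsto n-i$ simply swaps the two factors $(\alpha+1)_{n-i}$ and $(\beta+1)_i$ in the denominator while leaving $(-1)^{n-i}$, $(\sigma+1)_n$, and $K$ fixed; and that the coefficient $-i(n-i+\alpha+1)/(n-i+1)$ produced on the right, after the overall sign flip from $\bm{D}\mapsto-\bm{D}$, lands on $+i(n-i+\alpha+1)/(n-i+1)$ as required. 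No genuinely new analytic input is needed beyond Theorem~\ref{T:DiffRec-I} and the two symmetry relations, so the proof should be short.
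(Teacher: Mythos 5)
Your approach is exactly the one the paper uses: Theorem~\ref{T:DiffRec-II} is obtained from Theorem~\ref{T:DiffRec-I} by the substitution $\alpha\leftrightarrow\beta$, $i\mapsto n-i$, $x\mapsto 1-x$ together with the symmetries \eqref{E:JacobiSym} and \eqref{E:DualBerSym}, and your outline of that derivation is sound. One small correction to your final bookkeeping paragraph: from \eqref{E:Def-A-B} the sign factor $(-1)^{n-i}$ becomes $(-1)^{i}$ under $i\mapsto n-i$, so in fact $A^{(\beta,\alpha)}_{n,n-i}=(-1)^nA^{(\alpha,\beta)}_{ni}$ rather than $A^{(\alpha,\beta)}_{ni}$; this extra $(-1)^n$ is precisely what cancels the $(-1)^n$ produced by \eqref{E:JacobiSym} in the Jacobi term (as you correctly anticipated earlier in the same paragraph), so the identity \eqref{E:DiffRec-II} comes out with the stated coefficients.
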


The next differential-recurrence relation is more complicated. It relates
the second and first derivative of $D^n_i(x;\alpha,\beta)$ with the 
polynomials $D^n_{i-1}(x;\alpha,\beta)$, $D^n_i(x;\alpha,\beta)$, 
$D^n_{i+1}(x;\alpha,\beta)$.

\begin{theorem}\label{T:DiffRec-III}
The following relation holds:
\begin{eqnarray}\label{E:DiffRec-III}
\lefteqn{\Big(x(x-1)\bm{D}^2+\tfrac{1}{2}(\alpha-\beta+(\sigma+1)(2x-1))
                                       \bm{D}\Big)D^n_i(x;\alpha,\beta)}\\
&&\nonumber\hspace{1cm}
=(i-n)(i+\beta+1)D^n_{i+1}(x;\alpha,\beta)
+i(i-\alpha-n-1)D^n_{i-1}(x;\alpha,\beta)\\
&&\nonumber\hspace{5cm}
-(i(i-\alpha-n-1)+(i-n)(i+\beta+1))D^n_{i}(x;\alpha,\beta),
\end{eqnarray}
where $i=0,1,\ldots,n$.
\end{theorem}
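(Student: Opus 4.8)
The plan is to recognise that the second-order differential operator on the left-hand side of~\eqref{E:DiffRec-III} is exactly the Jacobi operator $\bm{L}^{(\alpha,\beta)}$ from~\eqref{E:DiffEqJacobi}, and then to exploit the fact that the Jacobi differential equation~\eqref{E:DiffEqJacobi} and the Hahn difference equation~\eqref{E:DiffEqHahn} share the \emph{same} eigenvalue $\lambda_k^{(\alpha,\beta)}=k(k+\sigma)$. The whole argument is thus an ``eigenvalue transfer'': it converts the differential action in the variable $x$ into a difference action in the index $i$.

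First I would apply $\bm{L}^{(\alpha,\beta)}$ to the expansion~\eqref{E:DualB-Jacobi} of $D^n_i(x;\alpha,\beta)$ in shifted Jacobi polynomials. Since $\bm{L}^{(\alpha,\beta)}$ differentiates only in $x$, it commutes past the finite sum over $k$ and past the Hahn factor $Q_k(i;\beta,\alpha;n)$, which does not depend on $x$. Using~\eqref{E:DiffEqJacobi} to replace $\bm{L}^{(\alpha,\beta)}R^{(\alpha,\beta)}_k(x)$ by $\lambda_k^{(\alpha,\beta)}R^{(\alpha,\beta)}_k(x)$ then inserts an extra factor $\lambda_k^{(\alpha,\beta)}$ inside the sum.

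The crucial observation is that $\sigma=\alpha+\beta+1$ is symmetric in its arguments, so $\lambda_k^{(\alpha,\beta)}=\lambda_k^{(\beta,\alpha)}=k(k+\sigma)$, which is precisely the eigenvalue of the Hahn difference equation~\eqref{E:DiffEqHahn} for the polynomials $Q_k(\,\cdot\,;\beta,\alpha;n)$. I would therefore invoke~\eqref{E:DiffEqHahn}, now with parameters $(\beta,\alpha,n)$ and with $i$ as the difference variable, to rewrite $\lambda_k^{(\alpha,\beta)}Q_k(i;\beta,\alpha;n)$ as ${\cal L}^{(\beta,\alpha,n)}_i Q_k(i;\beta,\alpha;n)=a(i)Q_k(i+1;\beta,\alpha;n)-c(i)Q_k(i;\beta,\alpha;n)+b(i)Q_k(i-1;\beta,\alpha;n)$, where, specialising~\eqref{E:OperL}, we have $a(i)=(i-n)(i+\beta+1)$, $b(i)=i(i-\alpha-n-1)$ and $c(i)=a(i)+b(i)$.

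Finally, because $a(i),b(i),c(i)$ are independent of $k$, I would factor them out of the sum over $k$ and recognise each of the three resulting sums---each being $K^{-1}\sum_k(-1)^k\frac{(2k/\sigma+1)(\sigma)_k}{(\alpha+1)_k}R^{(\alpha,\beta)}_k(x)$ times $Q_k$ evaluated at $i+1$, $i$, $i-1$ respectively---as the expansion~\eqref{E:DualB-Jacobi} of $D^n_{i+1}(x;\alpha,\beta)$, $D^n_i(x;\alpha,\beta)$ and $D^n_{i-1}(x;\alpha,\beta)$. Comparing $a(i),-c(i),b(i)$ with the coefficients on the right-hand side of~\eqref{E:DiffRec-III} finishes the proof. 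I expect no genuine obstacle; the only points demanding care are the parameter swap $\alpha\leftrightarrow\beta$ in the Hahn factor and checking that the boundary conventions are automatically respected---for $i=0$ one has $b(0)=0$, which suppresses the undefined $D^n_{-1}$, and for $i=n$ one has $a(n)=0$, which suppresses $D^n_{n+1}$---so that no separate edge cases need to be treated.
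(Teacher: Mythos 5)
Your proof is correct and follows essentially the same route as the paper: apply $\bm{L}^{(\alpha,\beta)}$ to the expansion~\eqref{E:DualB-Jacobi}, use the coincidence $\lambda_k^{(\alpha,\beta)}=\lambda_k^{(\beta,\alpha)}$ to trade the Jacobi eigenvalue for the Hahn difference operator ${\cal L}^{(\beta,\alpha,n)}_i$, and read off the coefficients $a(i)$, $b(i)$, $c(i)$. Your explicit identification of these coefficients and the boundary check at $i=0$ and $i=n$ are details the paper leaves implicit, but the argument is identical.
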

\begin{proof}
We use the representation~\eqref{E:DualB-Jacobi} of dual Bernstein polynomials,
the differential equation~\eqref{E:DiffEqJacobi} for shifted Jacobi 
polynomials, as well as the difference equation~\eqref{E:DiffEqHahn} satisfied
by Hahn polynomials.

Observe that   
\begin{eqnarray*}
\bm{L}^{(\alpha,\beta)}D^n_{i}(x;\alpha,\beta)
      &=&K^{-1}\sum_{k=0}^{n}
         (-1)^k\frac{(2k/\sigma+1)(\sigma)_k}
                    {(\alpha+1)_k}Q_k(i;\beta,\alpha;n)\cdot
                     \lambda_k^{(\alpha,\beta)}R^{(\alpha,\beta)}_k(x)\\
      &=&K^{-1}\sum_{k=0}^{n}
         (-1)^k\frac{(2k/\sigma+1)(\sigma)_k}
                    {(\alpha+1)_k}R^{(\alpha,\beta)}_k(x)\cdot
                     \lambda_k^{(\beta,\alpha)} Q_k(i;\beta,\alpha;n)\\                     
&=&{\cal L}^{(\beta,\alpha,n)}_i D^n_{i}(x;\alpha,\beta).                    
\end{eqnarray*}
% The proof is completed.
\end{proof}

%%%%%%%%%%%%%%%%%%%%%%%%%%%%%%%%%%%%%%%%%%%%%%%%%%%%%%%%%%%%%%%%%%%%%%%%%%%%%%
%%%%%%%%%%%%%%%%%%%%%%%%%%%%%%%%%%%%%%%%%%%%%%%%%%%%%%%%%%%%%%%%%%%%%%%%%%%%%%
\section{Differential equation}                        \label{S:DiffEqDualBer}
%%%%%%%%%%%%%%%%%%%%%%%%%%%%%%%%%%%%%%%%%%%%%%%%%%%%%%%%%%%%%%%%%%%%%%%%%%%%%%
%%%%%%%%%%%%%%%%%%%%%%%%%%%%%%%%%%%%%%%%%%%%%%%%%%%%%%%%%%%%%%%%%%%%%%%%%%%%%%

Using the new properties of dual Bernstein polynomials given in
Section~\ref{S:DiffRecRel}, one can construct the differential equation for
$D^n_i(x;\alpha,\beta)$.

\begin{theorem}\label{T:DualBer-NonHomo-Diff-Eq}
Dual Bernstein polynomials satisfy the second-order non-homogeneous 
differential equation with polynomial coefficients of the form
\begin{equation}\label{E:DualBer-NonHomo-Diff-Eq}
\bm{M}^{(\alpha,\beta)}_{ni}D^n_i(x;\alpha,\beta)=
          (n+\sigma+1)A^{(\alpha,\beta)}_{ni}R^{(\alpha+1,\beta+1)}_n(x),
\end{equation}
where
$$
\bm{M}^{(\alpha,\beta)}_{ni}:=
     x(x-1)\bm{D}^2+\Big((n+\sigma+3)x-i-\beta-2\Big)\bm{D}+(n+\sigma+1)\bm{I}.
$$
\end{theorem}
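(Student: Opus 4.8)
The plan is to combine the three differential-recurrence relations from Theorems~\ref{T:DiffRec-I}, \ref{T:DiffRec-II} and~\ref{T:DiffRec-III} so as to eliminate the neighbouring polynomials $D^n_{i-1}$ and $D^n_{i+1}$, leaving a single differential equation for $D^n_i$ alone. The right-hand side of~\eqref{E:DualBer-NonHomo-Diff-Eq} suggests that the shifted Jacobi polynomials $R^{(\alpha,\beta+1)}_n$ and $R^{(\alpha+1,\beta)}_n$ appearing in the first two theorems must also be removed, and consolidated into the single term $R^{(\alpha+1,\beta+1)}_n$. So the strategy has two intertwined eliminations: getting rid of the $D^n_{i\pm1}$ terms, and collapsing the two once-shifted Jacobi polynomials into one twice-shifted Jacobi polynomial.

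First I would take Theorem~\ref{T:DiffRec-III}, which already expresses the full Jacobi-type operator $\bm{L}^{(\alpha,\beta)}$ applied to $D^n_i$ in terms of $D^n_{i-1}$, $D^n_i$ and $D^n_{i+1}$ only, with no stray $R$-terms. This is the natural backbone. I then need to re-express the $D^n_{i+1}$ and $D^n_{i-1}$ contributions using Theorems~\ref{T:DiffRec-I} and~\ref{T:DiffRec-II}. Solving~\eqref{E:DiffRec-I} for $D^n_{i+1}$ gives $D^n_{i+1}$ as a combination of the first-order operator $((1-x)\bm{D}-(n-i+\alpha+1)\bm{I})D^n_i$ and $R^{(\alpha,\beta+1)}_n$; solving~\eqref{E:DiffRec-II} for $D^n_{i-1}$ gives $D^n_{i-1}$ in terms of $(x\bm{D}+(i+\beta+1)\bm{I})D^n_i$ and $R^{(\alpha+1,\beta)}_n$. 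Substituting both into~\eqref{E:DiffRec-III} then yields an equation in which every term either is a differential operator applied to $D^n_i$ or is one of the two once-shifted Jacobi polynomials. The differential operators must be collected: $\bm{L}^{(\alpha,\beta)}$ is second order, and the substituted pieces contribute $(1-x)\bm{D}$, $x\bm{D}$ and identity terms, so after combining and simplifying the polynomial coefficients one should recover the second-order operator $\bm{M}^{(\alpha,\beta)}_{ni}$ on the left.

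The last and most delicate step is handling the surviving Jacobi terms: the substitution leaves a known multiple of $R^{(\alpha,\beta+1)}_n(x)$ and a known multiple of $R^{(\alpha+1,\beta)}_n(x)$, and these must combine into exactly $(n+\sigma+1)A^{(\alpha,\beta)}_{ni}R^{(\alpha+1,\beta+1)}_n(x)$. For this I would invoke a contiguous/connection relation expressing $R^{(\alpha+1,\beta+1)}_n$ as a fixed linear combination of $R^{(\alpha,\beta+1)}_n$ and $R^{(\alpha+1,\beta)}_n$ (equivalently, relating the twice-shifted Jacobi polynomial to its two once-shifted parents); such identities follow directly from the hypergeometric definition~\eqref{E:JacobiP}. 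The main obstacle will be this bookkeeping: one must verify that the coefficients of $R^{(\alpha,\beta+1)}_n$ and $R^{(\alpha+1,\beta)}_n$ produced by the substitution are in the precise ratio dictated by that connection relation, so that they genuinely fuse into a single $R^{(\alpha+1,\beta+1)}_n$ term with the stated constant.

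I expect the coefficient arithmetic — tracking the $A^{(\alpha,\beta)}_{ni}$ prefactors and the rational functions of $i$, $n$, $\alpha$, $\beta$ through the two solved-for substitutions, and confirming both that the left side simplifies to $\bm{M}^{(\alpha,\beta)}_{ni}$ and that the right side collapses correctly — to be the bulk of the work, but all of it routine once the connection formula for the Jacobi polynomials is in hand. The conceptual content is entirely in the choice to use Theorem~\ref{T:DiffRec-III} as the backbone and Theorems~\ref{T:DiffRec-I}--\ref{T:DiffRec-II} to eliminate the index-shifted duals.
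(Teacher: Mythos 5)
Your proposal is correct and follows essentially the same route as the paper: substitute the expressions for $D^n_{i+1}$ and $D^n_{i-1}$ obtained from Theorems~\ref{T:DiffRec-I} and~\ref{T:DiffRec-II} into~\eqref{E:DiffRec-III}, collect the resulting first-order operators into $\bm{M}^{(\alpha,\beta)}_{ni}$, and then fuse the two surviving Jacobi terms via the contiguous relation $(n+\alpha+1)R^{(\alpha,\beta+1)}_n+(n+\beta+1)R^{(\alpha+1,\beta)}_n=(n+\sigma+1)R^{(\alpha+1,\beta+1)}_n$, which is exactly the identity the paper invokes.
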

\begin{proof}
% By determining $D^n_{i+1}(x;\alpha,\beta)$ from~\eqref{E:DiffRec-I}, as well as
% $D^n_{i-1}(x;\alpha,\beta)$ from~\eqref{E:DiffRec-II} and substituting to 
% the relation~\eqref{E:DiffRec-III}, we obtain
By substituting the expressions for $D^n_{i+1}(x;\alpha,\beta)$ and
$D^n_{i-1}(x;\alpha,\beta)$ determined by~\eqref{E:DiffRec-I} 
and~\eqref{E:DiffRec-II}, respectively, into equation~\eqref{E:DiffRec-III}, 
we obtain
$$
\bm{M}^{(\alpha,\beta)}_{ni}D^n_i(x;\alpha,\beta)=
        A^{(\alpha,\beta)}_{ni}
                     \left((n+\alpha+1)R^{(\alpha,\beta+1)}_n(x)+
                               (n+\beta+1)R^{(\alpha+1,\beta)}_n(x)\right).
$$
To complete the proof, observe that 
$$
(n+\alpha+1)R^{(\alpha,\beta+1)}_n(x)+
                               (n+\beta+1)R^{(\alpha+1,\beta)}_n(x)=
(n+\sigma+1)R^{(\alpha+1,\beta+1)}_n(x),                               
$$                               
which follows from~\eqref{E:JacobiP} after some algebra.
\end{proof}

Notice that by applying the second-order differential operator
$$
\bm{N}^{(\alpha,\beta)}_{ni}:=
\bm{L}^{(\alpha+1,\beta+1)}-\lambda_n^{(\alpha+1,\beta+1)}\bm{I}
$$ 
(cf.~\eqref{E:DiffEqJacobi}) to both sides of
Eq.~\eqref{E:DualBer-NonHomo-Diff-Eq}, we obtain the homogeneous differential
equation for dual Bernstein polynomials. 
% Therefore, the following corollary holds true.

\begin{corollary}\label{C:DualBer-Diff-Eq}
Dual Bernstein polynomials $D^n_i(x;\alpha,\beta)$ $(i=0,1,\ldots,n)$ satisfy
the fourth-order differential equation with polynomial coefficients of the form
\begin{equation}\label{E:DualBer-Diff-Eq}
\bm{Q}_4D^n_i(x;\alpha,\beta)\equiv
                 \bm{N}^{(\alpha,\beta)}_{ni}
                         \bm{M}^{(\alpha,\beta)}_{ni}D^n_i(x;\alpha,\beta)=0.
\end{equation}
\end{corollary}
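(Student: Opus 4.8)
The plan is to carry out the homogenization already announced in the remark preceding the statement: the forcing term on the right-hand side of the non-homogeneous equation \eqref{E:DualBer-NonHomo-Diff-Eq} is, up to the constant factor $(n+\sigma+1)A^{(\alpha,\beta)}_{ni}$, a single shifted Jacobi polynomial $R^{(\alpha+1,\beta+1)}_n$, and a single Jacobi polynomial is annihilated by its own eigenvalue operator. First I would specialize the eigenvalue equation \eqref{E:DiffEqJacobi} to the parameters $(\alpha+1,\beta+1)$ and the degree $n$, obtaining
$$
\bm{L}^{(\alpha+1,\beta+1)}R^{(\alpha+1,\beta+1)}_n(x)=
       \lambda_n^{(\alpha+1,\beta+1)}R^{(\alpha+1,\beta+1)}_n(x),
$$
which is precisely the assertion that $\bm{N}^{(\alpha,\beta)}_{ni}R^{(\alpha+1,\beta+1)}_n(x)=0$, since $\bm{N}^{(\alpha,\beta)}_{ni}=\bm{L}^{(\alpha+1,\beta+1)}-\lambda_n^{(\alpha+1,\beta+1)}\bm{I}$ by definition.

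Next I would apply the operator $\bm{N}^{(\alpha,\beta)}_{ni}$ to both sides of \eqref{E:DualBer-NonHomo-Diff-Eq}. On the left this yields the composition $\bm{N}^{(\alpha,\beta)}_{ni}\bm{M}^{(\alpha,\beta)}_{ni}D^n_i(x;\alpha,\beta)=\bm{Q}_4 D^n_i(x;\alpha,\beta)$, which is exactly the operator appearing in the corollary. On the right the constant $(n+\sigma+1)A^{(\alpha,\beta)}_{ni}$ factors out of the linear operator and multiplies $\bm{N}^{(\alpha,\beta)}_{ni}R^{(\alpha+1,\beta+1)}_n(x)$, which vanishes by the first step. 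Therefore $\bm{Q}_4 D^n_i(x;\alpha,\beta)=0$, the claimed fourth-order homogeneous equation. One point of care is that $\bm{N}^{(\alpha,\beta)}_{ni}$ must be applied on the left of $\bm{M}^{(\alpha,\beta)}_{ni}$, since these two second-order operators need not commute; but because we only act on one side of a valid identity, the argument is unaffected by the ordering.

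The remaining bookkeeping is to confirm that $\bm{Q}_4$ genuinely has order four with polynomial coefficients. Both $\bm{M}^{(\alpha,\beta)}_{ni}$ and $\bm{L}^{(\alpha+1,\beta+1)}$ are second-order differential operators whose leading term is $x(x-1)\bm{D}^2$ and whose remaining coefficients are polynomials, so their composition is a fourth-order operator with leading term $\big(x(x-1)\big)^2\bm{D}^4$ and polynomial coefficients throughout. Since the product of the two principal symbols equals $\big(x(x-1)\big)^2\not\equiv 0$, the top-order term does not cancel and the order is indeed four. There is essentially no analytic obstacle here: the entire difficulty of the result is already absorbed into Theorem~\ref{T:DualBer-NonHomo-Diff-Eq}, and this corollary is only the clean observation that a second-order forcing term built from one orthogonal polynomial can be removed by one further application of that polynomial's defining operator.
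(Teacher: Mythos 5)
Your proposal is correct and is essentially the paper's own argument: the remark preceding the corollary states exactly this homogenization, namely applying $\bm{N}^{(\alpha,\beta)}_{ni}=\bm{L}^{(\alpha+1,\beta+1)}-\lambda_n^{(\alpha+1,\beta+1)}\bm{I}$ to both sides of \eqref{E:DualBer-NonHomo-Diff-Eq} and using the Jacobi differential equation \eqref{E:DiffEqJacobi} to annihilate the right-hand side. Your additional checks (non-commutativity being irrelevant, the leading symbol $\big(x(x-1)\big)^2$ not vanishing) are sound and only make explicit what the paper leaves implicit.
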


Observe that the operator $\bm{Q}_4$ is a composition of two second-order
differential operators. For the reader's convenience, we give also
the explicit form of the differential equation~\eqref{E:DualBer-Diff-Eq}:
$$
\sum_{j=0}^{4}w_j(x)\bm{D}^jD^n_i(x;\alpha,\beta)=0,
$$
where
\begin{eqnarray*}
&&
w_4(x):=x^2(x-1)^2,\qquad 
w_3(x):=x(x-1)[(n+2\sigma+10)x-i-2\beta-6],\\
&&
w_2(x):=[(n+\sigma+3)(\sigma-n+7)+\sigma+3]x^2\\
&&
\hspace{2cm}
+[(n-1)^2+\alpha n-2\beta-(\sigma+3)(i+2\beta+8)-5]x+(\beta+2)(i+\beta+3),\\
&&
w_1(x):=-(n+\sigma+2)[(n^2+(n-2)(\sigma+3))x+(2-n)(i+\beta+2)-2i],\\
&&
w_0 (x):=-n(n+\sigma+1)_2.
\end{eqnarray*}

%%%%%%%%%%%%%%%%%%%%%%%%%%%%%%%%%%%%%%%%%%%%%%%%%%%%%%%%%%%%%%%%%%%%%%%%%%%%%%
%%%%%%%%%%%%%%%%%%%%%%%%%%%%%%%%%%%%%%%%%%%%%%%%%%%%%%%%%%%%%%%%%%%%%%%%%%%%%%
\section{Recurrence relation}                          \label{S:RecRelDualBer}                   
%%%%%%%%%%%%%%%%%%%%%%%%%%%%%%%%%%%%%%%%%%%%%%%%%%%%%%%%%%%%%%%%%%%%%%%%%%%%%%
%%%%%%%%%%%%%%%%%%%%%%%%%%%%%%%%%%%%%%%%%%%%%%%%%%%%%%%%%%%%%%%%%%%%%%%%%%%%%%

In~\cite[Theorem 5.1]{LW2006}, the following recurrence relation, which
connects dual Bernstein polynomials of degrees $n+1$ and $n$, as well as the
shifted Jacobi polynomial of degree $n+1$, was given:
\begin{equation}\label{E:RecRel-I}
D^{n+1}_i(x;\alpha,\beta)=\left(1-\dfrac{i}{n+1}\right)
   \,D^{n}_{i}(x;\alpha,\beta)+\frac{i}{n+1}\,D^{n}_{i-1}(x;\alpha,\beta)+
                         C^{(\alpha,\beta)}_{ni}R^{(\alpha,\beta)}_{n+1}(x),
\end{equation}
where $0\le i\le n+1$, and 
$$
C^{(\alpha,\beta)}_{ni}:=(-1)^{n-i+1}\frac{(2n+\sigma+2)(\sigma+1)_n}
                             {K(\alpha+1)_{n-i+1}(\beta+1)_{i}}.
$$
Let us mention that the case $\alpha=\beta=0$ of this relation was found 
earlier by Ciesielski~in~\cite{ZC1987}.

Now, using the results given in Section~\ref{S:DiffRecRel}, we show that it is
possible to construct a~homogeneous recurrence relation connecting five
consecutive (with respect to $i$) dual Bernstein polynomials of the same
degree~$n$.  

Let ${\cal E}^m$ be the $m$th \textit{shift operator} acting on the variable
$i$ in the following way:
$$
{\cal E}^m z_i:=z_{i+m}\qquad (m\in\mathbb Z).
$$ 
For the sake of simplicity, we write ${\cal I}:={\cal E}^0$ and 
${\cal E}:={\cal E}^1$.

For example, the operator ${\cal L}^{(\alpha,\beta,N)}_{i}$
(cf.~\eqref{E:OperL} and the proof of Theorem~\ref{T:DiffRec-III}) can be
written as:
$$
{\cal L}^{(\alpha,\beta,N)}_{i}=a(i){\cal E}-c(i){\cal I}+b(i){\cal E}^{-1}.
$$

The following theorem holds.

\begin{theorem}\label{T:DualBer-NonHomo-Rec-Rel}
Dual Bernstein polynomials satisfy the second-order non-homogeneous recurrence
relation of the form
\begin{equation}\label{E:DualBer-NonHomo-Rec-Rel}
{\cal M}^{(\alpha,\beta,n)}_{i}D^n_i(x;\alpha,\beta)=
                                                   G_{ni}^{(\alpha,\beta)}(x),
\end{equation}
where $i=0,1,\ldots,n$, and
\begin{eqnarray*}
&&{\cal M}^{(\alpha,\beta,n)}_{i}:=(i)_2(n-i+\alpha+1)(x-1){\cal E}^{-1}
-(n-i)_2(i+\beta+1)x{\cal E}\\
&&\hspace{4.75cm}+(i+1)(n-i+1)[(i+\beta+1)(1-x)+(n-i+\alpha+1)x]{\cal I},\\
&&G_{ni}^{(\alpha,\beta)}(x):=
A_{ni}^{(\alpha,\beta)}\Big((i+1)(n+\beta+1)(1-x)R_n^{(\alpha+1,\beta)}(x)\\
&&\hspace{4.75cm}+(n-i+1)(n+\alpha+1)xR_n^{(\alpha,\beta+1)}(x)\Big).
\end{eqnarray*}
\end{theorem}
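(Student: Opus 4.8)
The plan is to eliminate the first-order derivative term $\bm{D}D^n_i(x;\alpha,\beta)$ from the two differential-recurrence relations already established, namely~\eqref{E:DiffRec-I} and~\eqref{E:DiffRec-II}. In each of these the derivative enters multiplied by a polynomial in $x$: Theorem~\ref{T:DiffRec-I} carries the term $(1-x)\bm{D}D^n_i$, while Theorem~\ref{T:DiffRec-II} carries $x\bm{D}D^n_i$. Since a pure recurrence relation in the index $i$ must be free of derivatives, the natural first step is to form a linear combination of the two relations in which $\bm{D}D^n_i$ cancels.

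Concretely, I would multiply~\eqref{E:DiffRec-I} by $x$ and~\eqref{E:DiffRec-II} by $(1-x)$. The left-hand sides then share the common term $x(1-x)\bm{D}D^n_i(x;\alpha,\beta)$, so equating the two resulting right-hand sides removes the derivative entirely and produces an identity relating $D^n_{i-1}$, $D^n_i$, and $D^n_{i+1}$, together with the shifted Jacobi polynomials $R^{(\alpha,\beta+1)}_n$ and $R^{(\alpha+1,\beta)}_n$ inherited from the original right-hand sides. The coefficients of $D^n_{i\pm 1}$ appear with denominators $i+1$ and $n-i+1$, which I would clear by multiplying the whole identity through by $(i+1)(n-i+1)$.

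What remains is purely to identify the resulting coefficients with those defining ${\cal M}^{(\alpha,\beta,n)}_i$ and $G^{(\alpha,\beta)}_{ni}$. Using $(i)_2=i(i+1)$ and $(n-i)_2=(n-i)(n-i+1)$, together with the sign identities $i-n=-(n-i)$ and $-(1-x)=x-1$, the coefficient of $D^n_{i-1}$ collapses to $(i)_2(n-i+\alpha+1)(x-1)$, the coefficient of $D^n_{i+1}$ to $-(n-i)_2(i+\beta+1)x$, and the coefficient of $D^n_i$ to $(i+1)(n-i+1)[(i+\beta+1)(1-x)+(n-i+\alpha+1)x]$ --- exactly the terms multiplying ${\cal E}^{-1}$, ${\cal E}$, and ${\cal I}$ in the statement. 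On the right-hand side, multiplication by $(i+1)(n-i+1)$ cancels the denominators $i+1$ and $n-i+1$ and reproduces precisely $G^{(\alpha,\beta)}_{ni}(x)$.

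I expect no genuine obstacle here: the derivation is a direct elimination, and the only delicate point is the bookkeeping of clearing denominators and rewriting the products as Pochhammer symbols. It is worth noting that the boundary cases $i=0$ and $i=n$ require no separate argument, since $(i)_2$ vanishes at $i=0$ and $(n-i)_2$ vanishes at $i=n$, so these factors automatically suppress the out-of-range polynomials $D^n_{-1}$ and $D^n_{n+1}$, consistently with the convention adopted for dual Bernstein polynomials.
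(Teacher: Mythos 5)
Your proposal is correct and follows essentially the same route as the paper: the authors likewise eliminate the derivative by subtracting relation~\eqref{E:DiffRec-I} multiplied by $x$ from relation~\eqref{E:DiffRec-II} multiplied by $1-x$, then clear the denominators $i+1$ and $n-i+1$. Your coefficient identifications and the remark about the boundary cases $i=0$, $i=n$ are all consistent with the statement.
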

\begin{proof}
The recurrence~\eqref{E:DualBer-NonHomo-Rec-Rel} can be obtained in
the following way: we subtract the relation~\eqref{E:DiffRec-I}, 
multiplied by $x$, from the relation~\eqref{E:DiffRec-II}, multiplied by $1-x$. 
\end{proof}

Notice that the quantity
$H(i):=\left(A_{ni}^{(\alpha,\beta)}\right)^{-1}G_{ni}^{(\alpha,\beta)}(x)$ 
is a polynomial of the first degree in variable $i$. Thus we have
$({\cal E}-{\cal I})^2H(i)=0$. By applying the operator 
$$
{\cal N}^{(\alpha,\beta,n)}_{i}:={\cal E}^{-1}
        ({\cal E}-{\cal I})^2\left(A_{ni}^{(\alpha,\beta)}\right)^{-1}{\cal I}
$$
to both sides of the equation~\eqref{E:DualBer-NonHomo-Rec-Rel}, we
obtain a fourth-order homogeneous recurrence relation for the dual
Bernstein polynomials. % which leads us to the following corollary.

\begin{corollary}\label{C:DualBer-Rec-Rel}
Dual Bernstein polynomials % $D^n_i(x;\alpha,\beta)$ 
satisfy the fourth-order recurrence relation of the form
\begin{equation}\label{E:DualBer-Rec-Rel-a}
{\cal Q}_4D^n_i(x;\alpha,\beta)\equiv
    {\cal N}^{(\alpha,\beta,n)}_{i}{\cal M}^{(\alpha,\beta,n)}_{i}
                              D^n_i(x;\alpha,\beta)=0\qquad (0\leq i\leq n).
\end{equation}
\end{corollary}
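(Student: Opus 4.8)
The plan is to follow the recipe already outlined just before the statement: the fourth-order relation is obtained by annihilating the inhomogeneous term on the right-hand side of Theorem~\ref{T:DualBer-NonHomo-Rec-Rel}. First I would rewrite the non-homogeneous recurrence~\eqref{E:DualBer-NonHomo-Rec-Rel} in the factored form
\[
{\cal M}^{(\alpha,\beta,n)}_{i}D^n_i(x;\alpha,\beta)=A^{(\alpha,\beta)}_{ni}\,H(i),
\]
where $H(i):=\left(A_{ni}^{(\alpha,\beta)}\right)^{-1}G_{ni}^{(\alpha,\beta)}(x)$. Reading off the explicit form of $G_{ni}^{(\alpha,\beta)}(x)$ given in the theorem, the factor $A^{(\alpha,\beta)}_{ni}$ cancels and one is left with
\[
H(i)=(i+1)(n+\beta+1)(1-x)R_n^{(\alpha+1,\beta)}(x)+(n-i+1)(n+\alpha+1)x\,R_n^{(\alpha,\beta+1)}(x).
\]
Since the two shifted Jacobi polynomials appearing here do not depend on $i$, the quantity $H(i)$ is a polynomial of degree at most one in the variable $i$, with $x$-dependent coefficients. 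This is the single computational fact on which the whole argument rests, and it is immediate from the displayed formula.

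The key structural observation is then that a first-degree polynomial in $i$ is annihilated by the second forward difference, i.e.\ $({\cal E}-{\cal I})^2H(i)=0$. The plan is to exploit exactly this by applying the operator ${\cal N}^{(\alpha,\beta,n)}_{i}={\cal E}^{-1}({\cal E}-{\cal I})^2\left(A_{ni}^{(\alpha,\beta)}\right)^{-1}{\cal I}$ to both sides of the factored recurrence. On the right-hand side, reading the operator from right to left, multiplication by $\left(A_{ni}^{(\alpha,\beta)}\right)^{-1}$ converts $A^{(\alpha,\beta)}_{ni}H(i)$ back into $H(i)$, the second difference $({\cal E}-{\cal I})^2$ then kills it, and the final ${\cal E}^{-1}$ leaves zero. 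On the left-hand side the same operator produces ${\cal N}^{(\alpha,\beta,n)}_{i}{\cal M}^{(\alpha,\beta,n)}_{i}D^n_i(x;\alpha,\beta)={\cal Q}_4D^n_i(x;\alpha,\beta)$, which yields~\eqref{E:DualBer-Rec-Rel-a}.

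The main point requiring care --- and what I would regard as the chief obstacle --- is that the prefactor $\left(A_{ni}^{(\alpha,\beta)}\right)^{-1}$ is itself a function of $i$ and therefore does \emph{not} commute with the shift operators ${\cal E}^{\pm1}$. For this reason the order of composition in ${\cal N}^{(\alpha,\beta,n)}_{i}$ is essential: one must first strip off the index-dependent normalization, then difference, and only afterwards recenter with ${\cal E}^{-1}$. I would therefore be explicit that all operators act on the index $i$ and that the identity ${\cal N}^{(\alpha,\beta,n)}_{i}G_{ni}^{(\alpha,\beta)}(x)=0$ holds only because $\left(A_{ni}^{(\alpha,\beta)}\right)^{-1}$ is applied before $({\cal E}-{\cal I})^2$. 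Finally, to confirm that the resulting relation is genuinely of order four, I would track the shift spans: using ${\cal E}^{-1}({\cal E}-{\cal I})^2={\cal E}-2{\cal I}+{\cal E}^{-1}$, the operator ${\cal N}^{(\alpha,\beta,n)}_{i}$ spans $\{{\cal E}^{-1},{\cal I},{\cal E}\}$, and since ${\cal M}^{(\alpha,\beta,n)}_{i}$ likewise involves $\{{\cal E}^{-1},{\cal I},{\cal E}\}$, their composition ${\cal Q}_4$ connects the five consecutive polynomials $D^n_{i-2},D^n_{i-1},D^n_i,D^n_{i+1},D^n_{i+2}$, exactly as claimed.
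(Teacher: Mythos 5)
Your proposal is correct and follows exactly the route the paper itself takes: observing that $H(i)=\bigl(A_{ni}^{(\alpha,\beta)}\bigr)^{-1}G_{ni}^{(\alpha,\beta)}(x)$ is linear in $i$, hence annihilated by $({\cal E}-{\cal I})^2$, and then applying ${\cal N}^{(\alpha,\beta,n)}_{i}$ to both sides of~\eqref{E:DualBer-NonHomo-Rec-Rel}. Your additional remarks on the non-commutativity of the $i$-dependent prefactor with the shift operators and on the shift span confirming the order four are accurate and consistent with the paper's presentation.
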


Let us stress that the operator ${\cal Q}_4$ is a composition of two 
second-order difference operators. Below, we also give the explicit form
of the simplified recurrence relation~\eqref{E:DualBer-Rec-Rel-a}:
\begin{equation}\label{E:DualBer-Rec-Rel-b}
\sum_{j=-2}^{2}v_j(i)D^n_{i+j}(x;\alpha,\beta)=0,
\end{equation}
where
\begin{eqnarray*}
&&v_{-2}(i):=(1-x)(i-1)_2(n-i+\alpha)_3,\\
&&v_{-1}(i):=-i(n-i+\alpha)_2\{(i+\beta)(n-3i)\\
&&\hspace{2cm}+[n(n-3i+\alpha-\beta+4)+i(4i-\alpha+3\beta-4)+2(\alpha+2)]x\},\\
&&v_0(i):=(i+\beta)(n-i+\alpha)[z(i)x+(i+1)(i+\beta+1)(3i-2n)],\\
&&v_1(i):=(i-n)(i+\beta)_2\{(i+2)(i+\beta+2)\\
&&\hspace{2cm}-[n(2n-5i+2\alpha)+i(4i-3\alpha+\beta+4)+2(\beta+2)]x\},\\
&&v_2(i):=x(i+\beta)(i+\beta+1)_2(n-i-1)_2,
\end{eqnarray*}
and 
$z(i):=-6i^3+3(3n+\alpha-\beta)i^2-[n(5n-6\beta)+(4n+3)\sigma+3]i+
                                                 n[(n+1)(n+\alpha+1)+2\beta+2]$.

%%%%%%%%%%%%%%%%%%%%%%%%%%%%%%%%%%%%%%%%%%%%%%%%%%%%%%%%%%%%%%%%%%%%%%%%%%%%%%
%%%%%%%%%%%%%%%%%%%%%%%%%%%%%%%%%%%%%%%%%%%%%%%%%%%%%%%%%%%%%%%%%%%%%%%%%%%%%%
\section{Applications}                                  \label{S:Applications}                   
%%%%%%%%%%%%%%%%%%%%%%%%%%%%%%%%%%%%%%%%%%%%%%%%%%%%%%%%%%%%%%%%%%%%%%%%%%%%%%
%%%%%%%%%%%%%%%%%%%%%%%%%%%%%%%%%%%%%%%%%%%%%%%%%%%%%%%%%%%%%%%%%%%%%%%%%%%%%%

Now, we point out some possible applications of the obtained recurrence
relation. Let us consider the following task.  

\begin{problem}\label{P:Problem1}
Let us fix numbers: $n\in\mathbb N$, $x\in\mathbb C$ and
$\alpha,\beta>-1$. Consider the problem of computing the values 
$$
D^n_i(x;\alpha,\beta)
$$
for all $i=0,1,\ldots,n$. 
\end{problem}

An efficient solution of this problem gives us, e.g., the fast method of
evaluating the polynomial
\begin{equation}\label{E:DulBer-Poly-Form}
d(x):=\sum_{i=0}^{n}d_iD^n_i(x;\alpha,\beta),
\end{equation}
where coefficients $d_0,d_1,\ldots,d_n$ are given. Notice that such
representation plays a crucial role in the algorithm for merging of B\'{e}zier
curves which has been recently proposed in~\cite{WGL2015}. 

On the other hand, in many applications, such as least-square approximation in
B\'{e}zier form (cf.~\cite{LW2011}, \cite{LWK2012}) or numerical solving of
boundary value problems (cf., e.g., report~\cite{GW2018}) or fractional partial 
differential equations (see \cite{JBJ2017}, \cite{JJBB2017} and papers cited
therein), it is necessary to compute the collection of integrals of the form
$$
I_k:=\int_{0}^{1}(1-x)^\alpha x^\beta f(x)D^n_k(x;\alpha,\beta)\,\mbox{d}x
$$
for all $k=0,1,\ldots,n$ and a given function $f$. Recall that the main reason 
is that a polynomial
$$
p^\ast_n(x):=\sum_{k=0}^{n}I_kB^n_k(x)
$$
minimizes the value of the least-square error
$$
\int_{0}^{1}(1-x)^\alpha x^\beta (f(x)-p_n(x))^2\mbox{d}x\qquad (p_n\in\Pi_n).
$$
The numerical approximations of the integrals $I_0, I_1,\ldots, I_n$ 
involving the dual Bernstein polynomials can be computed, for example, by
quadrature rules (see, e.g., \cite[\S5]{DB}). It also requires the fast 
evaluation of polynomials $D^n_0(x;\alpha,\beta), D^n_1(x;\alpha,\beta),\ldots,
D^n_n(x;\alpha,\beta)$ in many \textit{nodes}.

The solutions of Problem~\ref{P:Problem1} which use the 
representations~\eqref{E:DualB-Jacobi}, \eqref{E:DualBerShort}
or~\eqref{E:DualBer_Power} of dual Bernstein polynomials, or the recurrence
relation~\eqref{E:RecRel-I} satisfied by these polynomials, have too high
computational complexity (notice that one has to compute also shifted 
Jacobi and/or Hahn polynomials, cf.~\eqref{E:JacobiP} and \eqref{E:HahnP}). 

Observe that it is more efficient to use the recurrence
relation~\eqref{E:DualBer-Rec-Rel-b} which is not explicitly related to shifted
Jacobi and Hahn polynomials. This recurrence allows us to solve the 
problem with the computational complexity $O(n)$. For details,
see \cite[\S7 and \S10.2]{Wimp1984}.

%\begin{remark}
%The famous 
Horner's rule (see, e.g., \cite[Eq.~(1.2.2)]{DB}) for evaluating the $n$th
degree polynomial given in the power basis also has the computational 
complexity $O(n)$. Taking into account that the dual Bernstein basis is much
more complicated than the power basis, the algorithms based on the
recurrence~\eqref{E:RecRel-I} for evaluating $D^n_i(x;\alpha,\beta)$ or 
a polynomial given in the form~\eqref{E:DulBer-Poly-Form} seem to be
interesting.
%\end{remark}

To show the efficiency of the new recurrence relation for dual Bernstein
polynomials, let us present the following numerical example. The results
have been obtained on a computer with \texttt{Intel Core i5-661 3.33Hz}
processor and \texttt{8GB} of \texttt{RAM}, using computer algebra 
system \textsf{Maple{\small\texttrademark}~8}. 
%%% with $18$-digit arithmetic (double precision). 

\begin{example}
For $n=10,15,20$ and $\alpha=\beta=0$ (\textit{Legendre's} case),
$\alpha=\beta=-0.5$ (\textit{Chebyshev's} case) and $\alpha=-0.33, \beta=5.66$
(\textit{non-standard} case), the values of dual polynomials
$D^n_i(x_k;\alpha,\beta)$ at all the points 
$x_r:=\tfrac{k}{M}$ $(0\leq k\leq M;\;M=100)$ and for all $i=0,1,\ldots,n$ have 
been computed by recurrence relations~\eqref{E:RecRel-I} (computational
complexity $O(Mn^2)$) and~\eqref{E:DualBer-Rec-Rel-b} (computational complexity
$O(Mn)$). Both methods give results of similar numerical quality. However the
algorithm using the new recurrence relation~\eqref{E:DualBer-Rec-Rel-b}
is significantly faster. See Table~\ref{T:Table1}.
\begin{table}
%%%%%%%%%%%%%
\renewcommand{\arraystretch}{1.75}
%%%%%%%%%%%%%
\begin{center}
\begin{tabular}{ll|cc|cc}
&& \multicolumn{2}{c}{Recurrence~\eqref{E:RecRel-I}} &
\multicolumn{2}{|c}{Recurrence~\eqref{E:DualBer-Rec-Rel-b}} \\
&& time & error & time & error \\ \hline
$n=10$ & $\alpha=\beta=0$ & $2.453$ & $0.40\cdot 10^{-31}$ & 
                            $0.688$ & $0.40\cdot 10^{-31}$ \\
       & $\alpha=\beta=-0.5$ & $2.750$ & $0.20\cdot 10^{-28}$ & 
                               $0.953$ & $0.71\cdot 10^{-26}$ \\
       & $\alpha=-0.33,\;\beta=5.66$ & $3.845$ & $0.25\cdot 10^{-24}$ & 
                                       $1.563$ & $0.33\cdot 10^{-24}$ \\ \hline
$n=15$ & $\alpha=\beta=0$ & $5.984$ & $0.35\cdot 10^{-25}$ & 
                            $0.937$ & $0.41\cdot 10^{-23}$ \\
       & $\alpha=\beta=-0.5$ & $8.000$ & $0.19\cdot 10^{-22}$ 
                             & $1.485$ & $0.11\cdot 10^{-22}$ \\
       & $\alpha=-0.33,\;\beta=5.66$ & $12.391$ & $0.13\cdot 10^{-21}$ & 
                                       $2.781$ & $0.44\cdot 10^{-20}$ \\ \hline
$n=20$ & $\alpha=\beta=0$ & $12.327$ & $0.18\cdot 10^{-19}$ & 
                            $1.329$ & $0.26\cdot 10^{-19}$ \\
       & $\alpha=\beta=-0.5$ & $17.734$ & $0.72\cdot 10^{-19}$ & 
                               $2.125$ & $0.17\cdot 10^{-18}$ \\
       & $\alpha=-0.33,\;\beta=5.66$ & $27.797$ & $0.41\cdot 10^{-19}$ & 
                                       $4.735$ & $0.90\cdot 10^{-19}$  
\end{tabular}
\end{center}
%%%%%%%%%%%%%
\renewcommand{\arraystretch}{1}
%%%%%%%%%%%%%
\caption{Results of numerical experiments (total time in seconds and
maximum error for $M=100$).}\label{T:Table1}
\end{table}
\end{example}

%\begin{remark}
%However, the problem of constructing the recurrence relation satisfied by dual
%Bernstein polynomials requires further research. The point is that some
%symbolic computations and tests made in the computer algebra system
%\textsf{Maple{\small\texttrademark}} suggest that for polynomials $D^n_i$ 
%exists a recurrence relation of the lower order (probably of order three).
%
%The authors are not able to find such a recurrence relation using the results
%presented in this paper.
%\end{remark}

%%%%%%%%%%%%%%%%%%%%%%%%%%%%%%%%%%%%%%%%%%%%%%%%%%%%%%%%%%%%%%%%%%%%%%%%%%%%%%
%%%%%%%%%%%%%%%%%%%%%%%%%%%%%%%%%%%%%%%%%%%%%%%%%%%%%%%%%%%%%%%%%%%%%%%%%%%%%%
%\section{Conclusions}                                   \label{S:Conclusions}
%%%%%%%%%%%%%%%%%%%%%%%%%%%%%%%%%%%%%%%%%%%%%%%%%%%%%%%%%%%%%%%%%%%%%%%%%%%%%%
%%%%%%%%%%%%%%%%%%%%%%%%%%%%%%%%%%%%%%%%%%%%%%%%%%%%%%%%%%%%%%%%%%%%%%%%%%%%%%

%%%%%%%%%%%%%%%%%%%%%%%%%%%%%%%%%%%%%%%%%%%%%%%%%%%%%%%%%%%%%%%%%%%%%%%%%%%%%%
%%%%%%%%%%%%%%%%%%%%%%%%%%%%%%%%%%%%%%%%%%%%%%%%%%%%%%%%%%%%%%%%%%%%%%%%%%%%%%

\bibliographystyle{elsart-num-sort} 
\biboptions{compress}
\bibliography{dualB-diff-eq-rev}

%%%%%%%%%%%%%%%%%%%%%%%%%%%%%%%%%%%%%%%%%%%%%%%%%%%%%%%%%%%%%%%%%%%%%%%%%%%%%%
%%%%%%%%%%%%%%%%%%%%%%%%%%%%%%%%%%%%%%%%%%%%%%%%%%%%%%%%%%%%%%%%%%%%%%%%%%%%%%

\end{document}